\documentclass[]{amsart}

\usepackage[dvipdfmx]{graphicx,color}

\makeatletter
\@addtoreset{equation}{section}

\makeatother

\newtheorem{theorem}{Theorem}[section]

\newtheorem{corollary}[theorem]{Corollary}
\newtheorem{lemma}[theorem]{Lemma}
\theoremstyle{definition}
\newtheorem{definition}[theorem]{Definition}

\theoremstyle{remark}
\newtheorem{remark}[theorem]{Remark}

\newcommand{\e}{\varepsilon}
\newcommand{\be}{\bar{\varepsilon}}

\DeclareMathOperator{\cn}{cn}
\DeclareMathOperator{\dn}{dn}
\DeclareMathOperator{\sn}{sn}

\DeclareMathOperator{\graph}{graph}

\begin{document}

\title{Polar tangential angles and free elasticae}
\author{Tatsuya Miura}
\address{Department of Mathematics, Tokyo Institute of Technology, 2-12-1 Ookayama, Meguro-ku, Tokyo 152-8551, Japan}
\email{miura@math.titech.ac.jp}
\keywords{Polar tangential angle; Monotone curvature; Free elastica; Obstacle problem.}
\date{\today}
\subjclass[2010]{}

\begin{abstract}
In this note we investigate the behavior of the polar tangential angle of a general plane curve, and in particular prove its monotonicity for certain curves of monotone curvature.
As an application we give (non)existence results for an obstacle problem involving free elasticae.
\end{abstract}

\maketitle


\section{Introduction}\label{sectintroduction}

The purpose of this note is to develop a geometric approach to elastic curve problems, i.e., variational problems involving the total squared curvature, also known as the bending energy.
The variational study of elastic curves is originated with D.~Bernoulli and L.~Euler in the 1740's, but it is still ongoing even concerning the original (clamped) boundary value problem.
In particular, the properties of solutions such as uniqueness or stability are not fully understood and sensitively depend on the parameters in the constraints; see e.g.\ \cite{Linner1998a,Singer2008,Sachkov2008,Miura2020} and references therein.

In order to study boundary conditions involving the tangent vector, it would be helpful to precisely understand the so-called polar tangential angle for a plane curve, which is the angle formed between the position vector and the tangent vector.
In this note we give a geometric characterization of the first-derivative sign of the polar tangential angle, and then deduce the monotonicity of the angle for certain curves of monotone curvature with the help of the classical Tait-Kneser theorem.

Our geometric aspect would be useful for studying an elastic curve, since its curvature is represented by Jacobi elliptic functions, whose monotone parts are well understood.
As a concrete example, we apply our monotonicity result to a free boundary problem involving the bending energy.
More precisely, we minimize the bending energy among graphical curves $u:[-1,1]\to\mathbb{R}$ subject to the boundary condition $u(\pm1)=0$ such that $u\geq\psi$, where $\psi$ is a given obstacle function.
The existence of graph minimizers is a somewhat delicate issue \cite{DallAcqua2018,Mueller2019,Yoshizawa2019} in contrast to the confinement-type obstacle problem in \cite{Dayrens2018}.
In this paper, choosing $\psi$ to be a symmetric cone, we prove (non)existence results depending on the height of the cone.
Our results reprove the nonexistence result of M\"{u}ller \cite{Mueller2019} by a novel geometric approach, and also provide a new uniqueness result in the class of symmetric graphs.
Very recently, the same uniqueness result is independently obtained by Yoshizawa \cite{Yoshizawa2019} in a different way; see Remark \ref{rem:obstacle} for a precise comparative review.

This paper is organized as follows.
Section \ref{sec:polar} is devoted to understanding the polar tangential angle.
In Section \ref{sec:obstacle} we apply a monotonicity result (Corollary \ref{cor:monotoneangle}) to the aforementioned obstacle problem (Theorem \ref{thm:obstacle}).

\subsection*{Acknowledgments}
When attending a mini-symposium in the OIST hosted by \mbox{James McCoy}, the author was informed by \mbox{Kensuke Yoshizawa} that he has also obtained the same kind of results \cite{Yoshizawa2019}.
The author would like to thank them for encouraging publication of this paper.
The author is also grateful to \mbox{Shinya Okabe}, \mbox{Glen Wheeler}, and also an anonymous referee for giving helpful comments to an earlier version of this manuscript.
This work is in part supported by JSPS KAKENHI Grant Number 18H03670 and 20K14341, and by Grant for Basic Science Research Projects from The Sumitomo Foundation.

\section{Geometry of polar tangential angles}\label{sec:polar}

Throughout this section we consider a smooth plane curve $\gamma$ parameterized by the arclength parameter $s$, namely, $\gamma\in C^\infty([0,L),\mathbb{R}^2)$, where $L\in(0,\infty]$, and $|\gamma_s(s)|=1$ for any $s$, where the subscript of $s$ denotes the arclength derivative.
Let $T$ denote the unit tangent $\gamma_s$, and $N:=R_{\pi/2}T$ the unit normal, where $R_\theta$ stands for the counterclockwise rotation matrix through angle $\theta\in\mathbb{R}$.

Our main object of study is the polar tangential angle.
To define it rigorously, we call a plane curve $\gamma$ {\em generic} if the curve does not pass the origin $O\in\mathbb{R}^2$ except for $s=0$.
For such a curve we denote the normalized position vector by $X:=\gamma/|\gamma|$.

\begin{definition}[Polar tangential angle function]
  For a generic plane curve $\gamma:[0,L)\to\mathbb{R}^2$, the {\em polar tangential angle function} $\omega:(0,L)\to\mathbb{R}$ is defined as a smooth function such that $R_{\omega}X=T$ holds on $(0,L)$.
\end{definition}

\begin{figure}
  \includegraphics[width=25mm]{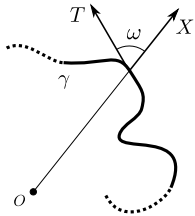}
  \caption{Polar tangential angle.}
  \label{fig:polartangentialangle}
\end{figure}

\begin{remark}
  The value of $\omega$ coincides with the angle between $X$ and $T$, i.e., $\arccos(X\cdot T)$, where $\cdot$ denotes the inner product, as long as $\omega\in[0,\pi]$; see Figure \ref{fig:polartangentialangle}.
  The polar tangential angle function $\omega$ is smooth everywhere and unique up to addition by a constant in $2\pi\mathbb{Z}$.
  Unless a curve is generic, the function $\omega$ needs to be discontinuous.
\end{remark}

The polar tangential angle is a classical notion and often used in the literature.
For example, the logarithmic spiral ($r=e^{a\theta}$ in the polar coordinates) is also known as the {\em equiangular spiral} since its polar tangential angle is constant.
In this section we gain more insight into the behavior of the polar tangential angle.

\subsection{A general derivative formula for the polar tangential angle}

We first give a general formula for the derivative of the polar tangential angle function $\omega$.
Let $\kappa$ denote the {\em (signed) curvature}, i.e., $\gamma_{ss}=\kappa N$.
Recall that the {\em evolute} $\e$ of a plane curve $\gamma$ is defined as the locus of the centers of osculating circles, that is, for points where $\kappa\neq0$,
$$\e:=\gamma+\kappa^{-1}N.$$
Furthermore, in order to state our theorem in a unified way, we introduce 
$$\be:=\kappa\gamma+N.$$
This is same as $\kappa\e$ with the understanding that $\kappa\e=N$ when $\kappa=0$, and in particular defined everywhere as opposed to the original evolute.
Obviously, the direction of $\be$ is same as (resp.\ opposite to) that of $\e$ if $\kappa>0$ (resp.\ $\kappa<0$).

Here is the key identity for the polar tangential angle.

\begin{theorem}\label{thmcharacterization}
  The identity $\omega_s=|\gamma|^{-2}(\gamma\cdot\be)$ holds for any generic plane curve $\gamma$.
\end{theorem}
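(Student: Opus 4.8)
\emph{Proof strategy.} The plan is to differentiate the defining identity $R_\omega X=T$ directly and to read off $\omega_s$ by comparing normal components, using that genericity makes $X=\gamma/|\gamma|$ smooth and $|\gamma|>0$ on $(0,L)$. Write $r:=|\gamma|$, so $X=r^{-1}\gamma$. First I would record the elementary formula
\[
X_s=r^{-1}\bigl(T-(X\cdot T)X\bigr),
\]
which comes from $\gamma_s=T$ and $r_s=(\gamma\cdot T)/r$. Since $X$ is a unit vector, $X_s$ is a scalar multiple of $R_{\pi/2}X$; pairing the displayed expression with $R_{\pi/2}X$ and using $X\cdot R_{\pi/2}X=0$ gives $X_s=r^{-1}(T\cdot R_{\pi/2}X)\,R_{\pi/2}X$.

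Next I would differentiate $T=R_\omega X$. Using $\frac{d}{d\omega}R_\omega=R_{\pi/2}R_\omega$ and the fact that planar rotations commute, together with $R_{\pi/2}T=N$ and $R_\omega X=T$, this yields
\[
\kappa N=T_s=\omega_s R_{\pi/2}R_\omega X+R_\omega X_s=\omega_s R_{\pi/2}T+r^{-1}(T\cdot R_{\pi/2}X)R_{\pi/2}R_\omega X=\Bigl(\omega_s+r^{-1}(T\cdot R_{\pi/2}X)\Bigr)N.
\]
Since $N\neq0$, this already gives $\omega_s=\kappa-r^{-1}(T\cdot R_{\pi/2}X)$. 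It then remains to rewrite the last term in the stated coordinate-free form: substituting $R_{\pi/2}X=r^{-1}R_{\pi/2}\gamma$ and using the identity $T\cdot R_{\pi/2}\gamma=-\gamma\cdot N$ gives $r^{-1}(T\cdot R_{\pi/2}X)=-r^{-2}(\gamma\cdot N)$, hence
\[
\omega_s=\kappa+r^{-2}(\gamma\cdot N)=|\gamma|^{-2}\bigl(\kappa|\gamma|^2+\gamma\cdot N\bigr)=|\gamma|^{-2}\,\gamma\cdot(\kappa\gamma+N)=|\gamma|^{-2}(\gamma\cdot\be),
\]
as claimed.

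I expect the only genuine obstacle to be the sign bookkeeping in the step $T\cdot R_{\pi/2}\gamma=-\gamma\cdot N$, which rests on $R_{\pi/2}^{\mathsf T}=R_{-\pi/2}=-R_{\pi/2}$ together with $N=R_{\pi/2}T$ (equivalently, correctly identifying the derivative of the polar angle of $\gamma$ as $-|\gamma|^{-2}(\gamma\cdot N)$); the rest is a single differentiation. As a sanity check, $\be=\kappa\gamma+N$ vanishes precisely along circular arcs centered at the origin, on which indeed $\omega\equiv\pi/2$ is constant, matching the geometric meaning of $\omega_s=0$. One may alternatively run the same computation after introducing smooth lifts of the turning angle $\phi$ (with $\phi_s=\kappa$) and of the polar angle $\theta$ of $\gamma$, writing $\omega=\phi-\theta$ modulo $2\pi$ and computing $\theta_s$ directly; genericity guarantees these lifts exist and are smooth on $(0,L)$.
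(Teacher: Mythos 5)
Your proof is correct: the formula for $X_s$, the differentiation of $R_\omega X=T$ using $\frac{d}{d\omega}R_\omega=R_{\pi/2}R_\omega$ and commutativity of planar rotations, the identification of the $N$-component, and the sign bookkeeping in $T\cdot R_{\pi/2}\gamma=-\gamma\cdot N$ all check out, and they deliver exactly $\omega_s=|\gamma|^{-2}\gamma\cdot(\kappa\gamma+N)$. Your route differs from the paper's in a small but genuine way. The paper never differentiates the rotation identity itself; instead it differentiates the two scalar quantities $X\cdot T=\cos\omega$ and $X\cdot N=-\sin\omega$, obtaining the two identities $(-\cos\omega)\,\omega_s=(-\cos\omega)\bigl[(\gamma\cdot\be)/|\gamma|^2\bigr]$ and $(-\sin\omega)\,\omega_s=(-\sin\omega)\bigl[(\gamma\cdot\be)/|\gamma|^2\bigr]$, and then must combine them because each one alone degenerates where $\cos\omega$ or $\sin\omega$ vanishes. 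Your single vector computation avoids that case-splitting entirely, since the coefficient you compare against is the unit vector $N$, which never vanishes. It also yields the geometrically transparent intermediate identity $\omega_s=\kappa+|\gamma|^{-2}(\gamma\cdot N)$, i.e.\ the decomposition of $\omega$ as turning angle minus polar angle (your closing remark about lifts makes this explicit), from which the stated coordinate-free form is just the observation $\kappa|\gamma|^2=\gamma\cdot(\kappa\gamma)$. Both arguments use the same raw ingredients (the formula for $X_s$ and the Frenet equations), so the gain is one of economy and transparency rather than generality; conversely, the paper's version keeps everything at the level of the two scalar products that are reused in Corollaries \ref{cor:sign1} and \ref{cor:sign2}.
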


\begin{proof}
  Since $N_s=-\kappa T$ and also $X_s=|\gamma|^{-1}[T-(X\cdot T)X]$, we have
  \begin{align*}
    (X\cdot N)_s= X_s\cdot N+X\cdot N_s 
    =-(X\cdot T)[(\gamma\cdot\be)/|\gamma|^2].
  \end{align*}
  Inserting $X\cdot T=\cos\omega$ and $X\cdot N=-\sin\omega$ to the above identity, we obtain
  \begin{align}\label{eqn:mainthm1}
    (-\cos\omega)\omega_s=(-\cos\omega)[(\gamma\cdot\be)/|\gamma|^2],
  \end{align}
  which ensures the assertion as long as $\cos\omega\neq0$.
  Similarly, we also have
  \begin{align*}
    (X\cdot T)_s =|\gamma|^{-1}(1-(X\cdot T)^2)+\kappa(X\cdot N) 
    =(X\cdot N)[(\gamma\cdot\be)/|\gamma|^2],
  \end{align*}
  where the identity $1-(X\cdot T)^2=(X\cdot N)^2$ is used, and hence
  \begin{align}\label{eqn:mainthm2}
    (-\sin\omega)\omega_s=(-\sin\omega)[(\gamma\cdot\be)/|\gamma|^2].
  \end{align}
  Combining (\ref{eqn:mainthm1}) and (\ref{eqn:mainthm2}), we complete the proof.
\end{proof}

As Theorem \ref{thmcharacterization} is not geometrically intuitive, we now characterize the first-derivative sign of the polar tangential angle from a geometric point of view.
To this end we separately consider two cases, depending on whether the curvature vanishes.

We first think of the simpler case that the curvature vanishes.
For a generic plane curve $\gamma$ we let $L(s)$ denote the half line from the origin and in the same direction as the vector $\gamma(s)$.
Then we have the following geometric characterization, the proof of which is safely omitted; see Figure \ref{fig:sign1} and Theorem \ref{thmcharacterization}.

\begin{corollary}
  \label{cor:sign1}
  Let $\gamma$ be a generic plane curve and assume that $\kappa(s)=0$ at some $s$.
  Then the signs of $\omega_s(s)$ and $\gamma(s)\cdot N(s)$ coincide.
  In other words, $\omega_s(s)$ is positive (resp.\ negative) if and only if the curve $\gamma$ transversally passes $L(s)$ from left to right (resp.\ right to left) at $s$.
  In addition, $\omega_s(s)=0$ if and only if $\gamma$ touches $L(s)$ at $s$.
\end{corollary}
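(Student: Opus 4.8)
The plan is to derive both assertions from the derivative formula of Theorem \ref{thmcharacterization}. First I would specialize that formula at the given point: since $\be=\kappa\gamma+N$ and $\kappa(s)=0$ by hypothesis, one has $\be(s)=N(s)$, so Theorem \ref{thmcharacterization} gives
\[
  \omega_s(s)=|\gamma(s)|^{-2}\bigl(\gamma(s)\cdot N(s)\bigr).
\]
Genericity of $\gamma$ forces $\gamma(s)\neq O$ for $s\in(0,L)$, hence the scalar $|\gamma(s)|^{-2}$ is strictly positive; therefore $\omega_s(s)$ and $\gamma(s)\cdot N(s)$ have the same sign and vanish simultaneously. This already proves the first claim, and reduces everything else to a purely planar translation of the sign of $\gamma(s)\cdot N(s)$.

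Next I would unwind the meaning of that sign in terms of how the curve meets $L(s)$. Write $X(s)=\gamma(s)/|\gamma(s)|$ for the unit vector pointing along $L(s)$, and $Y(s):=R_{\pi/2}X(s)$ for the unit vector obtained by rotating it counterclockwise, so that the open left side of the line through $L(s)$ is $\{p:\ p\cdot Y(s)>0\}$. A one-line computation with rotation matrices (using $N=R_{\pi/2}T$ together with $\langle a,R_\theta b\rangle=\langle R_{-\theta}a,b\rangle$) yields $T(s)\cdot Y(s)=-N(s)\cdot X(s)=-|\gamma(s)|^{-1}\gamma(s)\cdot N(s)$. Since moreover $\gamma(s)\cdot Y(s)=0$, the signed distance $\sigma\mapsto\gamma(\sigma)\cdot Y(s)$ from the curve to that line vanishes at $\sigma=s$ with derivative $T(s)\cdot Y(s)$. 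Hence $\gamma(s)\cdot N(s)>0$ is equivalent to $T(s)\cdot Y(s)<0$, meaning that as $\sigma$ increases past $s$ the curve leaves the left side and enters the right side, i.e.\ it passes $L(s)$ transversally from left to right; the reversed inequality gives the opposite crossing, while $\gamma(s)\cdot N(s)=0$ means $T(s)$ is parallel to $L(s)$, i.e.\ $\gamma$ is tangent to (touches) $L(s)$ at $s$. The condition $T(s)\cdot Y(s)\neq0$ is precisely transversality, under which the implicit function theorem guarantees the curve genuinely changes sides at $s$; combining this with the sign identity of the previous paragraph yields all three statements, and Figure \ref{fig:sign1} fixes the left/right convention.

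The only delicate point---and presumably the reason the proof can be safely omitted---is the orientation bookkeeping: one must check that ``left to right'' pairs with the $+$ sign rather than with $-$. Once the counterclockwise convention for $R_\theta$ and the choice $N=R_{\pi/2}T$ are used consistently, as above, the pairing is forced, and there is no analytic content beyond the elementary rotation identity and, for the transversal crossing, the implicit function theorem.
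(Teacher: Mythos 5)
Your argument is exactly the one the paper has in mind when it says the proof ``is safely omitted'': setting $\kappa(s)=0$ in Theorem \ref{thmcharacterization} gives $\omega_s(s)=|\gamma(s)|^{-2}\,\gamma(s)\cdot N(s)$, and the rest is the elementary left/right interpretation of the sign of $\gamma(s)\cdot N(s)$ relative to $L(s)$, which you carry out correctly (including the orientation bookkeeping via $N=R_{\pi/2}T$). So the proposal is correct and follows essentially the same route as the paper, just written out in full.
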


\begin{figure}
  \includegraphics[width=65mm]{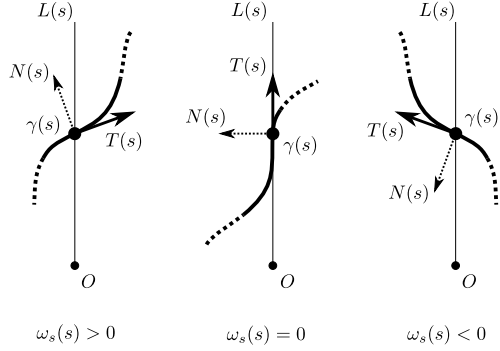}
  \caption{The sign of $\omega_s$ when $\kappa=0$.}
  \label{fig:sign1}
\end{figure}

We now turn to the more interesting case that the curvature does not vanish.
Since $\omega$ is an oriented notion, it is suitable to consider the sign of $\kappa\omega_s$ rather than $\omega_s$.
For a given generic curve, we let $C(s)$ denote the osculating circle at $s$, namely, the circle of radius $1/|\kappa(s)|$ centered at $\e(s)$ ($=\gamma(s)+(\kappa(s))^{-1}N(s)$) provided that $\kappa(s)\neq0$.
In addition, let $\hat{c}(s)$ be the circle whose diameter is attained by the points $\gamma(s)$ and $\e(s)$.
Then we have the following geometric characterization, the proof of which is again safely omitted; see Figure \ref{fig:sign2} and Theorem \ref{thmcharacterization}.

\begin{corollary}
  \label{cor:sign2}
  Let $\gamma$ be a generic plane curve and assume that $\kappa(s)\neq0$ at some $s$.
  Then the signs of $\kappa(s)\omega_s(s)$ and $\gamma(s)\cdot\e(s)$ coincide.
  In other words, $\kappa(s)\omega_s(s)$ is positive (resp.\ negative) if and only if the origin is outside (resp.\ inside) the circle $\hat{c}(s)$.
  In addition, $\kappa(s)\omega_s(s)=0$ if and only if the origin lies on $\hat{c}(s)$.
  In particular, $\kappa\omega_s$ is positive as long as the osculating circle $C$ does not enclose the origin.
\end{corollary}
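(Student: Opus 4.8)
The plan is to reduce the statement to the algebraic identity of Theorem~\ref{thmcharacterization} and then translate the resulting sign condition into plane geometry by means of (a quantitative form of) Thales' theorem.

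First I would rewrite $\omega_s$ using the evolute. Since $\kappa(s)\neq0$ we have $\be=\kappa\gamma+N=\kappa(\gamma+\kappa^{-1}N)=\kappa\e$ at $s$, so Theorem~\ref{thmcharacterization} gives
$$\kappa\omega_s=|\gamma|^{-2}\kappa(\gamma\cdot\be)=|\gamma|^{-2}\kappa^{2}(\gamma\cdot\e).$$
Because $|\gamma|^{-2}\kappa^{2}>0$ at $s$, this already shows that $\kappa(s)\omega_s(s)$ and $\gamma(s)\cdot\e(s)$ share the same sign, which is the first assertion.

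Next I would use the elementary identity $(A-P)\cdot(B-P)=\bigl|P-\tfrac12(A+B)\bigr|^{2}-\tfrac14|A-B|^{2}$ for $A,B,P\in\mathbb{R}^2$, whence $P$ lies strictly inside, on, or strictly outside the circle of diameter $AB$ according as $(A-P)\cdot(B-P)$ is negative, zero, or positive. Taking $A=\gamma(s)$, $B=\e(s)$, $P=O$, the left-hand side equals $\gamma(s)\cdot\e(s)$, so the origin is outside, on, or inside $\hat c(s)$ exactly when $\gamma(s)\cdot\e(s)$ is positive, zero, or negative; together with the previous step this yields the ``in other words'' and ``in addition'' parts. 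For the last assertion I would observe that $\hat c(s)$ has center $\tfrac12(\gamma(s)+\e(s))$ and radius $\tfrac12|\e(s)-\gamma(s)|=\tfrac1{2|\kappa(s)|}$, while $C(s)$ has center $\e(s)$ and radius $\tfrac1{|\kappa(s)|}$; since the distance between the two centers equals the difference of the radii, $\hat c(s)$ is internally tangent to $C(s)$ at the point $\gamma(s)$, and the closed disk bounded by $\hat c(s)$ sits inside the closed osculating disk, meeting its boundary only at $\gamma(s)$. As $\gamma(s)\neq O$ by genericity (recall $\omega$ is defined on $(0,L)$), if $O$ does not lie in the open osculating disk then $O$ lies strictly outside $\hat c(s)$, hence $\gamma(s)\cdot\e(s)>0$ and $\kappa(s)\omega_s(s)>0$.

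I do not expect a genuine obstacle here; the only step asking for a little care is the last one, namely recognizing $\hat c(s)$ as the circle internally tangent to the osculating circle at the base point so that the disk inclusion --- and thus the one-sided implication from the osculating circle --- is clean, together with keeping the sign bookkeeping consistent (the factor $\kappa$ is exactly what makes the statement orientation-independent and compatible with the $\kappa=0$ case in Corollary~\ref{cor:sign1}).
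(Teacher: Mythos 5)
Your proof is correct and follows exactly the route the paper intends (the paper omits the proof as an immediate consequence of Theorem~\ref{thmcharacterization}): the identity $\kappa\omega_s=|\gamma|^{-2}\kappa^2(\gamma\cdot\e)$ reduces everything to the sign of $\gamma\cdot\e$, and the Thales-type identity translates that sign into the position of the origin relative to $\hat c(s)$. Your final observation that $\hat c(s)$ is internally tangent to $C(s)$ at $\gamma(s)$ cleanly supplies the last assertion, including the boundary case $O\in C(s)$, which is handled by genericity.
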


\begin{figure}
  \includegraphics[width=90mm]{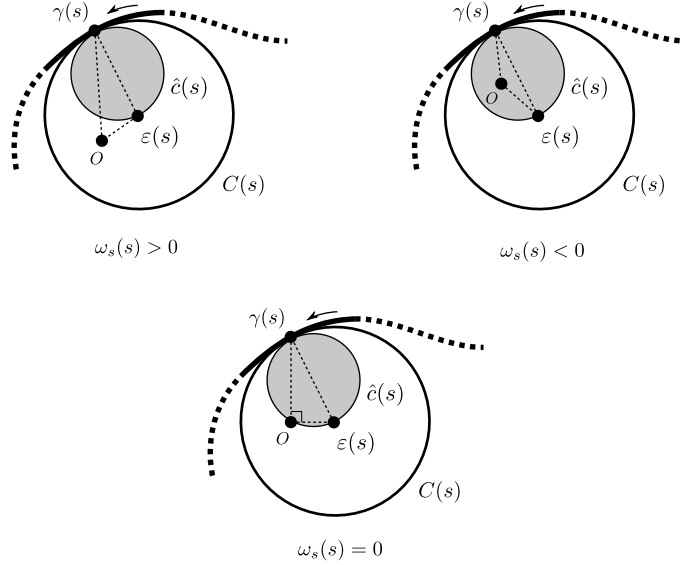}
  \caption{The sign of $\omega_s$ when $\kappa>0$.}
  \label{fig:sign2}
\end{figure}

\subsection{Sufficient conditions for monotonicity}

In this subsection we provide simple sufficient conditions for the monotonicity of the polar tangential angle.

The key assumption is monotonicity of the curvature, which controls the global behavior of osculating circles.
Indeed, the classical {\em Tait-Kneser theorem} \cite{Tait1896,Kneser1912} (see also \cite{Ghys2013}) states that if the curvature of a plane curve is strictly monotone, then the osculating circles are pairwise disjoint.
In addition, if the curvature has no sign change, then subsequent circles are nested.
Moreover, it is not difficult to observe that if the curvature and its derivative have same sign (resp.\ different sign), then the osculating circles become smaller (resp.\ larger) as the arclength parameter increases.

The monotonicity of the polar tangential angle is somewhat delicate, and in fact the monotonicity of curvature is still not sufficient.
Here we impose an additional assumption on the initial state $s=0$.
Let $D(s)$ denote the open disk enclosed by the osculating circle $C(s)$.
For a generic plane curve $\gamma$ such that $(\kappa^2)_s=2\kappa\kappa_s>0$ for any $s>0$,
we define the {\em initial osculating disk} $\widetilde{D}(0)$ as the open set given by
$$\widetilde{D}(0):=\lim_{s\downarrow0}D(s),$$
where the limit is well defined since $0<s_1<s_2\Rightarrow D(s_1)\subset D(s_2)$ thanks to the Tait-Kneser theorem and the discussion above.
Notice that if $\kappa(0)\neq0$, then $\widetilde{D}(0)$ is nothing but the open disk $D(0)$ enclosed by the osculating circle $C(0)$.
If $\kappa(0)=0$, then $\widetilde{D}(0)$ is a limit half-plane:
$$\widetilde{D}(0)=\gamma(0)+\{ p\in\mathbb{R}^2 \mid p\cdot \widetilde{N}(0)>0 \},\quad \mbox{where}\  \widetilde{N}(0):=\lim_{s\downarrow0}\gamma_{ss}(s)/|\gamma_{ss}(s)|.$$
Then we have the following

\begin{corollary}\label{cor:monotone1}
  Let $\gamma$ be a generic plane curve.
  If the derivative of $\kappa^2$ is positive at any $s>0$, and if the initial osculating disk $\widetilde{D}(0)$ does not include the origin $O$, then $\kappa(s)\omega_s(s)>0$ for any $s>0$.
  In particular, $\omega$ is strictly monotone.
\end{corollary}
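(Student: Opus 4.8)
The plan is to reduce everything to the sign statement in Corollary \ref{cor:sign2} (together with Corollary \ref{cor:sign1} to handle points where $\kappa$ vanishes) and then use the Tait–Kneser monotonicity of the osculating disks to propagate the initial hypothesis to all $s>0$. The key observation is that, by Corollary \ref{cor:sign2}, for any $s$ with $\kappa(s)\neq0$ the sign of $\kappa(s)\omega_s(s)$ is positive precisely when the origin $O$ lies outside the circle $\hat c(s)$ whose diameter is the segment $[\gamma(s),\e(s)]$; in particular $\kappa(s)\omega_s(s)>0$ whenever $O\notin \overline{D(s)}$, and more generally whenever $O$ is not in the closed osculating disk, since $\hat c(s)\subset\overline{D(s)}$. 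So it suffices to show that $O\notin D(s)$ for every $s>0$ (and at points with $\kappa=0$, that $\gamma$ crosses $L(s)$ in the correct direction, which will follow from the same monotonicity picture).

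First I would record the structure coming from the hypothesis $(\kappa^2)_s=2\kappa\kappa_s>0$ on $(0,L)$: this forces $\kappa$ to be nowhere zero on $(0,L)$ except possibly at isolated points, and in fact (since $\kappa\kappa_s>0$) $|\kappa|$ is strictly increasing, so $\kappa$ can change sign at most once, and if it does it passes through $0$ with $\kappa_s\neq0$ there. The Tait–Kneser theorem, in the nested form quoted in the excerpt, then gives $0<s_1<s_2\Rightarrow \overline{D(s_1)}\subset D(s_2)$ (the disks are strictly nested and shrinking toward $s=0$), which is exactly what makes $\widetilde D(0)=\lim_{s\downarrow0}D(s)=\bigcap_{s>0}D(s)$ well defined as stated. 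Hence for every $s>0$ we have $\overline{D(s')}\subset D(s)$ for all $0<s'<s$, and letting $s'\downarrow0$, $\widetilde D(0)\subseteq \overline{D(s')}\subset D(s)$; equivalently $D(s)\supseteq\widetilde D(0)$ for all $s>0$. Wait—this inclusion goes the wrong way for what I want directly, so instead I use the nesting in the contrapositive: the complements are nested the other way, $\mathbb{R}^2\setminus D(s)\subset \mathbb{R}^2\setminus D(s')$ for $s<s'$, and since $O\notin\widetilde D(0)=\bigcap_{s>0}D(s)$ only tells me $O$ escapes some $D(s)$, I must use that $\widetilde D(0)$ is the \emph{smallest} disk, i.e. $\widetilde D(0)\subset D(s)$, together with the fact that $O\notin\widetilde D(0)$ combined with the geometry below.

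The cleanest route, then: since $\widetilde D(0)$ does not contain $O$ and the family $\{D(s)\}_{s>0}$ is increasing with $\bigcap_{s>0}\overline{D(s)}=\overline{\widetilde D(0)}$ — here I use that the osculating circles are pairwise disjoint (Tait–Kneser), so a point in $\overline{D(s)}$ for all $s$ must lie in the limiting set $\overline{\widetilde D(0)}$ — we get that $O\notin\overline{\widetilde D(0)}$ would immediately give $O\notin\overline{D(s)}$ for all small $s$, hence by nesting for all $s>0$, and Corollary \ref{cor:sign2} finishes it. The genuinely delicate case, and the main obstacle I anticipate, is the boundary case $O\in\partial\widetilde D(0)$, i.e. the origin lies on the initial osculating circle (or on the boundary line of the initial half-plane when $\kappa(0)=0$): then for $s>0$ the strict nesting $\overline{\widetilde D(0)}\subset D(s)$ would put $O$ \emph{inside} $D(s)$, seemingly giving $\kappa\omega_s<0$. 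I expect this is resolved by noting that the hypothesis is only that $\widetilde D(0)$ (the open disk) does not include $O$, so $O\in\partial\widetilde D(0)$ is allowed, and one must check more carefully that $\hat c(s)$—which is strictly smaller than $C(s)$ and tangent to it at $\gamma(s)$—still excludes $O$: as $s\downarrow0$ the circle $\hat c(s)$ limits onto the segment from $\gamma(0)$ to the initial center, which together with the strict shrinking of disks and the fact that $O$ sits on the \emph{outer} circle $\partial\widetilde D(0)$ rather than on any $\hat c(s)$, should yield $\gamma(s)\cdot\e(s)>0$ strictly for $s>0$, hence strict monotonicity. Handling this limiting comparison between $\hat c(s)$ and $\partial\widetilde D(0)$ carefully—and the analogous statement for the half-plane case via Corollary \ref{cor:sign1}—is where the real work lies; the rest is bookkeeping with the Tait–Kneser nesting.
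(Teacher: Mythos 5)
Your overall strategy---reduce to Corollary \ref{cor:sign2} and propagate the hypothesis on $\widetilde{D}(0)$ via the Tait--Kneser nesting---is the right one, but the proof has a genuine gap caused by reversing the direction of the nesting, and this reversal is what produces both your invalid propagation step and the spurious ``boundary case'' you leave unresolved. Since $(\kappa^2)_s>0$ at every $s>0$, the function $|\kappa|$ is strictly increasing on $(0,L)$; in particular $\kappa$ never vanishes for $s>0$ (at a zero of $\kappa$ one would have $(\kappa^2)_s=0$), so your discussion of sign changes of $\kappa$ and of Corollary \ref{cor:sign1} is moot. More importantly, increasing $|\kappa|$ means the radii $1/|\kappa|$ strictly decrease, so the osculating disks \emph{shrink} as $s$ increases: Tait--Kneser gives $\overline{D(s_1)}\subset D(s_0)$ for $0<s_0<s_1$, not $\overline{D(s_1)}\subset D(s_2)$ for $s_1<s_2$ as you assert. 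Consequently $\widetilde{D}(0)=\lim_{s\downarrow0}D(s)$ is the \emph{increasing} limit $\bigcup_{s>0}D(s)$, the largest set of the family, not $\bigcap_{s>0}D(s)$; this is also forced by the half-plane description when $\kappa(0)=0$, since there the radii blow up as $s\downarrow0$, so the disks cannot be ``shrinking toward $s=0$.'' (The displayed inclusion in the definition of $\widetilde{D}(0)$ is oriented misleadingly, but the half-plane limit and the remark that the osculating circles become smaller when $\kappa\kappa_s>0$ fix the intended direction.)

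With your reversed direction the argument cannot close: from ``$O\notin\overline{D(s)}$ for all small $s$'' you infer ``hence by nesting for all $s>0$,'' but for an increasing family being outside the small disks says nothing about the later, larger ones; and the case $O\in\partial\widetilde{D}(0)$, which you flag as ``where the real work lies'' and do not settle, only arises because you made $\widetilde{D}(0)$ the smallest set. With the correct direction the proof is immediate and there is no boundary case: given $s>0$, pick $0<s_0<s$; then $\overline{D(s)}\subset D(s_0)\subset\widetilde{D}(0)$, so $O\notin\widetilde{D}(0)$ (an open set) already yields $O\notin\overline{D(s)}$, and since $\kappa(s)\neq0$ the last clause of Corollary \ref{cor:sign2} gives $\kappa(s)\omega_s(s)>0$; strict monotonicity of $\omega$ follows. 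This one-line propagation is exactly the paper's proof; no comparison of $\hat{c}(s)$ with $\partial\widetilde{D}(0)$ and no limiting analysis is needed (your observation that the disk bounded by $\hat{c}(s)$ lies inside $\overline{D(s)}$ is correct but already contained in the final sentence of Corollary \ref{cor:sign2}).
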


\begin{proof}\label{cor:monotone2}
  By the Tait-Kneser theorem and by the positivity of the derivative of $\kappa^2$, we have $\overline{D(s_1)}\subset D(s_0)$ for any $0<s_0<s_1$, where $\overline{D(s_1)}$ denotes the closure of $D(s_1)$.
  Since $D(s_0)\subset\widetilde{D}(0)$ holds by definition of $\widetilde{D}(0)$, the assumption $O\not\in\widetilde{D}(0)$ implies that $O\not\in\overline{D(s)}$ for any $s>0$.
  Therefore, Corollary \ref{cor:sign2} implies that $\kappa\omega_s>0$ for any $s>0$.
  Monotonicity of $\omega$ follows immediately.
\end{proof}

In particular, if such a curve $\gamma$ starts from the origin, then the condition $O\not\in\widetilde{D}(0)$ is obviously satisfied, and also $\gamma(s)\neq0$ always holds for $s>0$ thanks to the Tait-Kneser theorem.
We conclude with an immediate consequence of Corollary \ref{cor:monotone1}, the proof of which can be safely omitted.

\begin{corollary}\label{cor:monotoneangle}
  Let $\gamma:[0,L]\to\mathbb{R}^2$ be a plane curve such that $\gamma(0)=0$.
  If $\kappa\kappa_s$ is positive on $(0,L)$, then so is $\kappa\omega_s$.
  In particular, $\omega$ is strictly monotone.
\end{corollary}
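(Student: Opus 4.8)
The plan is to deduce Corollary \ref{cor:monotoneangle} from Corollary \ref{cor:monotone1} by checking that its hypotheses are met. So suppose $\gamma:[0,L]\to\mathbb{R}^2$ satisfies $\gamma(0)=0$ and $\kappa\kappa_s>0$ on $(0,L)$. First I would observe that $\kappa\kappa_s>0$ is exactly the condition that the derivative of $\kappa^2$ is positive on $(0,L)$, which is the first hypothesis of Corollary \ref{cor:monotone1}. The only remaining point is to verify that $\gamma$ is generic (i.e.\ does not return to the origin for $s>0$) and that the initial osculating disk $\widetilde{D}(0)$ does not contain $O$.

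For the genericity, the idea is that since $\kappa^2$ is strictly increasing away from $s=0$, the curvature is nonzero for all $s>0$ (indeed $|\kappa(s)|>|\kappa(0^+)|\geq 0$, with strict inequality once $s>0$), so all osculating circles $C(s)$ with $s>0$ are genuine circles and, by the Tait-Kneser theorem together with the nesting discussion in the text, they are pairwise disjoint with $\overline{D(s_1)}\subset D(s_0)$ for $0<s_0<s_1$. Since $\gamma(s)\in C(s)\subset \overline{D(s_0)}$ for any $s>s_0$, letting $s_0\downarrow 0$ shows $\gamma(s)\in\widetilde{D}(0)$ for all $s>0$. Now $\gamma(0)=0$ lies on the boundary of $\widetilde{D}(0)$: if $\kappa(0)\neq 0$ this is because $\gamma(0)$ lies on its own osculating circle $C(0)=\partial D(0)$, and if $\kappa(0)=0$ it is because $\widetilde{D}(0)$ is the open half-plane $\gamma(0)+\{p\cdot\widetilde{N}(0)>0\}$ whose boundary line passes through $\gamma(0)=0$. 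In either case $O=\gamma(0)\notin\widetilde{D}(0)$, which simultaneously gives the second hypothesis of Corollary \ref{cor:monotone1} and, since $\gamma(s)\in\widetilde{D}(0)$ for $s>0$ while $O\notin\widetilde{D}(0)$, shows $\gamma(s)\neq O$ for $s>0$, so $\gamma$ is generic and $\omega$ is well defined on $(0,L)$.

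With both hypotheses verified, Corollary \ref{cor:monotone1} applies directly and yields $\kappa(s)\omega_s(s)>0$ for all $s>0$, hence in particular $\omega_s$ never vanishes on $(0,L)$ and $\omega$ is strictly monotone. The main (very mild) obstacle is purely bookkeeping: one must be slightly careful at $s=0$, distinguishing the cases $\kappa(0)\neq 0$ and $\kappa(0)=0$ so as to identify $\widetilde{D}(0)$ correctly and confirm that its boundary passes through the origin; everything else is an immediate unwinding of definitions and a citation of the already-established Corollary \ref{cor:monotone1}. This is exactly why the paper remarks that the proof can be safely omitted.
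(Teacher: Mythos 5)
Your proposal is correct and follows exactly the route the paper intends: the paper omits the proof precisely because, as noted in the paragraph preceding the corollary, a curve starting at the origin automatically satisfies $O\notin\widetilde{D}(0)$ and remains away from the origin for $s>0$ by the Tait--Kneser nesting, so Corollary \ref{cor:monotone1} applies. Your write-up simply fills in these details (genericity, the two cases $\kappa(0)\neq 0$ and $\kappa(0)=0$ for identifying $\partial\widetilde{D}(0)$), which is exactly the argument the author had in mind.
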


\section{Application to an obstacle problem for free elasticae}\label{sec:obstacle}

We apply our monotonicity result to the following higher order obstacle problem:
\begin{equation}\label{eq:min}
  \inf_{u\in X_\psi} B[u], \qquad \mbox{where}\ B[u]:= \int_{\graph{u}}\kappa^2ds,
\end{equation}
the admissible function space is given by
$$X_\psi=\{u\in W^{2,2}(I)\mid u(\pm1)=0,\ u\geq\psi\}\quad \mbox{with}\ I=(-1,1),$$
and $\psi\in C(\bar{I})$ is a symmetric cone such that $\psi(\pm1)<0$.
Here we call $\psi$ a symmetric cone if $\psi(x)=\psi(-x)$ and $\psi$ is affine on $(0,1)$.
The functional $B[u]$ means the total squared curvature (also known as the bending energy) along the graph curve of $u$.
For a graphical curve, $B$ can be expressed purely in terms of the height function $u$ via the formula: $B[u]=\int_Iu''^2(1+u'^2)^{-5/2}dx$.

Here we prove that Corollary \ref{cor:monotoneangle} implies certain (non)existence results.
Let
\begin{equation}\label{eq:height}
  c_*:=\int_{0}^{\pi/2}\sqrt{\cos\varphi}d\varphi, \quad h_*:=\frac{2}{c_*} = 1.66925368...
\end{equation}
Let $X_{\psi,\mathrm{sym}}$ be the subspace of all even symmetric functions in $X_\psi$.

\begin{theorem}\label{thm:obstacle}
  Let $\psi\in C(\bar{I})$ be a symmetric cone such that $\psi(\pm1)<0$ and $\psi(0)=:h>0$.
  If $h<h_*$, then there is a unique minimizer $\bar{u}\in X_{\psi,\mathrm{sym}}$ of $B$ in $X_{\psi,\mathrm{sym}}$.
  If $h\geq h_*$, then there is no minimizer of $B$ in $X_{\psi,\mathrm{sym}}$, and also in $X_\psi$.
\end{theorem}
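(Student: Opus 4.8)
The plan is to reduce the obstacle problem to an ODE analysis of symmetric minimizers, then use the polar tangential angle monotonicity of Corollary \ref{cor:monotoneangle} to obstruct the existence of a minimizer whose graph touches the tip of the cone. First I would establish, via the direct method, that for each fixed $h>0$ a minimizer $\bar{u}\in X_{\psi,\mathrm{sym}}$ exists: the functional $B$ is coercive enough on the symmetric class (the boundary conditions $u(\pm1)=0$ together with $u\geq\psi$ and evenness pin down the curve), and $B$ is lower semicontinuous under weak $W^{2,2}$ convergence, while the obstacle constraint and the linear boundary conditions are preserved in the limit. One must be slightly careful because minimizing sequences could develop large slopes; here the geometric reformulation $B[u]=\int_\gamma \kappa^2\,ds$ over the graph, parametrized by arclength, shows that $B$ controls the total turning and hence, with fixed endpoints, prevents escape to infinity. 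So existence in $X_{\psi,\mathrm{sym}}$ is routine; the real content is the dichotomy in $h$.

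Next I would analyze the Euler--Lagrange structure of a symmetric minimizer $\bar{u}$. By standard obstacle-problem regularity for this fourth-order functional (cf.\ \cite{DallAcqua2018,Mueller2019}), $\bar{u}$ is $C^1$ (indeed $W^{3,\infty}$ away from the contact set), the coincidence set $\{\bar u=\psi\}$ is where the constraint is active, and on the open complement the curve is a \emph{free elastica}, i.e.\ a critical point of the bending energy with no length or other constraint, so its curvature satisfies the free-elastica ODE and is given by (a branch of) Jacobi elliptic functions. Because $\psi$ is an affine cone that is strictly concave at the tip only in the sense of a corner, the contact set for a symmetric admissible graph with $u(0)\geq h$ can only be the single point $x=0$ (the graph cannot coincide with an affine piece along an interval without violating the free-elastica equation there unless that piece is a straight segment through the origin, which is incompatible with $u(\pm1)=0<\psi(0)$). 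Thus $\bar u$ touches the obstacle exactly at the tip, with $\bar u(0)=h$, and on $(0,1]$ the graph of $\bar u$ is a free elastica meeting the point $(0,h)$; by symmetry it suffices to study this half.

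The decisive step is the following: translate so that the contact point $(0,h)$ becomes the origin $O$, and consider the half-curve $\gamma:[0,L]\to\mathbb{R}^2$ starting at $O$ and ending at $(1,0)$ (relative to the original frame, i.e.\ at $(1,-h)$ after translation). One computes that along a nonconstant free elastica the curvature is monotone on each ``arch'' between consecutive extrema, so $\kappa\kappa_s$ has a sign there; restricting to the maximal initial subarc on which $\kappa\kappa_s>0$, Corollary \ref{cor:monotoneangle} gives that the polar tangential angle $\omega$ (measured from the tip $O$) is strictly monotone. The geometric meaning is that the position vector $\gamma$ and the tangent $T$ rotate relative to one another in a controlled, one-signed way, which forces an inequality between the height $h$ and the horizontal extent; carrying out the elliptic-integral bookkeeping identifies the threshold constant $h_*=2/c_*$ with $c_*=\int_0^{\pi/2}\sqrt{\cos\varphi}\,d\varphi$ as in \eqref{eq:height}. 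Concretely, $\omega$ running over an interval of length exactly governed by $c_*$ corresponds to $h=h_*$; if $h\geq h_*$ one shows any such symmetric graphical free elastica through the tip would have to cross itself or fail to be a graph over $[0,1]$, so no admissible minimizer can touch the obstacle, yet a non-touching competitor can always be pushed below $h$, contradiction — hence nonexistence in $X_{\psi,\mathrm{sym}}$, and a short additional argument (any minimizer in $X_\psi$ may be symmetrized without increasing $B$, using convexity of the integrand in $u''$ and reflection) upgrades this to nonexistence in $X_\psi$. Conversely, for $h<h_*$ the monotonicity of $\omega$ is exactly what gives \emph{uniqueness}: two symmetric minimizers would yield two free elasticae from the tip with the same endpoint data and the same monotone $\omega$-profile, and the strict monotonicity lets one match them parameter-by-parameter, forcing them to coincide.

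The main obstacle I anticipate is the third step's quantitative matching: proving that the monotone-$\omega$ constraint sharply yields the constant $h_*$ rather than merely some threshold. This requires pinning down \emph{which} branch of the Jacobi elliptic curvature profile is realized by the symmetric minimizer (in particular that it is the ``borderline'' orbit whose curvature has a single sign and whose tangent turns by a prescribed amount), and then evaluating the resulting period/height integral in closed form to recognize $\int_0^{\pi/2}\sqrt{\cos\varphi}\,d\varphi$. The subtlety is that a priori the minimizer could be a concatenation of several elastica arches glued at the obstacle or at inflection points; ruling out multi-arch configurations — by showing that each extra arch strictly increases $B$ while the geometric constraints (graphicality, symmetry, fixed endpoints, touching only at the tip) leave no room — is where the polar tangential angle monotonicity does its essential work, since a non-monotone $\omega$ is precisely the signature of an inadmissible self-approaching or non-graphical configuration.
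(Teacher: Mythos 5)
Your overall strategy (reduce to a boundary value problem for one arch of the rectangular elastica and count its solutions via monotonicity of the polar tangential angle) is the same as the paper's, but the execution has genuine gaps. The most serious one is the choice of base point: you translate the \emph{tip} of the cone to the origin and measure $\omega$ from there. Along the relevant half of the graph the curvature of a rectangular elastica has its extremum at the tip and vanishes at the endpoint $(\pm1,0)$, so starting from the tip one has $\kappa\kappa_s<0$, and Corollary \ref{cor:monotoneangle} (which requires $\gamma(0)=0$ and $\kappa\kappa_s>0$) does not apply; moreover, with the vertex at the tip the angle $\omega$ does not directly encode the height $h$. The paper instead bases the angle at the endpoint where $u=u''=0$: the normalized quarter-arch $U_*$ starts there with vertical slope and increasing $|\kappa|$, Corollary \ref{cor:monotoneangle} applies, and solving \eqref{eq:BVPsym} is shown (through an explicit one-to-one correspondence via scaling $\lambda$ and rotation $\phi$) to be equivalent to solving $\tan(-\omega_*(s))=h$. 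The threshold is then not ``elliptic-integral bookkeeping'' left for later: it is exactly Lemma \ref{lem:graph}, where the quarter-arch is computed to have height $\sqrt2$ and width $c_*/\sqrt2$, so that $\omega_*$ ranges over $(-\arctan h_*,0)$ with $h_*=2/c_*$; strict monotonicity of $\omega_*$ then gives a unique $s$ for $h<h_*$ and none for $h\ge h_*$. Your uniqueness sketch (``match two minimizers parameter-by-parameter'') needs precisely this reduction to a single scalar equation to become an argument.

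Two further steps would fail as written. For nonexistence in the full class $X_\psi$, your proposed symmetrization (``reflection plus convexity of the integrand in $u''$'') is unjustified: the integrand $u''^2(1+u'^2)^{-5/2}$ is not convex in $(u',u'')$, and reflecting the graph about its maximum point does not preserve the interval $(-1,1)$ unless the maximum is at $x=0$. The paper avoids symmetrization entirely: it takes the maximum point $\bar x\in(-1,0]$, where $\bar u'(\bar x)=0$, rescales the piece over $[-1,\bar x]$ using the scale invariance of free elasticae, and obtains a solution of \eqref{eq:BVPsym} with height $\tilde h=\bar u(\bar x)/a\ge h\ge h_*$ (since $a\le 1$), contradicting the nonexistence part of the lemma. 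Finally, your direct-method existence argument is too optimistic: small bending energy does not control the slope of a graph (a large circular arc has tiny energy and vertical tangents), so coercivity in $W^{2,2}$ is exactly the delicate issue here; the paper does not reprove existence but cites Dall'Acqua--Deckelnick for the symmetric class. A minor point: contact along an affine piece of the cone is not excluded by the elastica equation (which need not hold on the coincidence set) but by concavity of the minimizer, which forces the coincidence set to be empty or the apex, and emptiness is ruled out because a global free elastica with vanishing curvature at both endpoints would have a vertical slope, contradicting $W^{2,2}(I)\subset C^1(\bar I)$.
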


\begin{remark}[Representability of unique solutions]
  Our proof of Theorem \ref{thm:obstacle} (or more precisely Lemma \ref{lem:1} below) immediately implies that the curvature as well as the angle function of our unique symmetric solution for $h\in(0,h_*)$ can be explicitly parameterized in terms of elliptic functions.
  Note however that to this end we need to use some constants uniquely characterized by $h$, respectively, which solve somewhat complicated transcendental equations involving elliptic functions.
  In this paper we do not go into the details of completely explicit formulae.
\end{remark}

\begin{remark}[Non-graphical case]
  If we allow non-graphical curves to be competitors, then there is no minimizer because an arbitrary large circular arc circumventing the obstacle is admissible so that the infimum is zero, but this infimum is not attained as a straight segment is not admissible due to the obstacle.
  Thus our graphical minimizers may be regarded as nontrivial critical points in the non-graphical problem.
\end{remark}

\begin{remark}[Comparative review]\label{rem:obstacle}
  The existence of a minimizer in the symmetric class is already obtained by Dall'Acqua-Deckelnick \cite{DallAcqua2018} for a more general $\psi$, and hence the novel part is the uniqueness result.
  The nonexistence result is obtained by M\"{u}ller \cite{Mueller2019} except for the critical value $h=h_*$.
  In addition, very recently, Yoshizawa \cite{Yoshizawa2019} independently obtained the same results as in Theorem \ref{thm:obstacle} by a different approach, which is based on a shooting method and directly deals with a fourth order ODE for $u$.
  Our method is more geometric and mainly focuses on the curvature, thus being significantly different from the previous methods \cite{DallAcqua2018,Mueller2019,Yoshizawa2019}.
  We expect that our geometric aspect is also useful for analyzing critical points of other functionals, e.g., including the effect of length, or dealing with non-quadratic exponents, since in both cases the curvature monotone part of a solution is well understood.
  However, the presence of length yields a multiplier so that the elliptic modulus is not fixed \cite{Linner1998}, while a large exponent $p>2$ yields a so-called ``flat-core'' solution involving interval-type zeroes of curvature \cite{Watanabe2014}, and hence in both cases more candidates of solutions need to be considered.
  We finally mention that the aforementioned authors in \cite{DallAcqua2018,Mueller2019} use a different scaling, namely $I=(0,1)$ instead of $I=(-1,1)$, but it is easy to see that their value of $h_*$ is consistent with ours.
\end{remark}

\subsection{Free elastica}\label{subsec:freeelastica}

For later use, as well as clarifying the reason why $h_*$ appears in Theorem \ref{thm:obstacle}, we recall some well-known facts about minimizers.

We first recall that if $u\in X_{\psi}$ is a minimizer in \eqref{eq:min}, then we have
\begin{align}\label{eq:BVP}
  \begin{cases}
    u\ \mbox{is concave},\\
    2\kappa_{ss}+\kappa^3=0 \quad\mbox{on}\ \graph{u}\setminus\graph{\psi},\\
    u(\pm1)=u''(\pm1)=0.
  \end{cases}
\end{align}
In fact, the concavity follows since otherwise taking the concave envelope decreases the energy, cf.\ \cite{DallAcqua2018}; the equation in the second line and the last boundary condition follow by standard calculation of the first variation.
We remark that the equation is understood first in the sense of distribution, but then in the classical sense by using a standard bootstrapping argument, cf.\ \cite{Dayrens2018}.
The second order boundary condition means the curvature of the graph vanishes at the endpoints.
Notice that by concavity of $u$ we immediately deduce that the coincidence set $\graph{u}\cap\graph{\psi}$ is either empty or the apex of the cone; we will see later that it cannot be empty.

A solution to the equation in the second line of \eqref{eq:BVP} is called a {\em free elastica}, which is a specific example of Euler's elastica.
This equation possesses the fine scale invariance in the sense that if a curve $\gamma$ is a solution, then so is every curve that coincides with $\gamma$ up to similarity (i.e., translation and scaling).
A free elastica is essentially unique and described in terms of the Jacobi elliptic function.

Given $k\in[0,1]$, we let $\cn(x;k)$ denote the elliptic cosine function with elliptic modulus $k$, that is, $\cn(x;k):=\cos\phi$ for a unique value $\phi$ such that $x=\int_0^\phi g(\theta)d\theta$, where $g(\theta)=(1-k^2\sin^2\theta)^{-1/2}$.
Recall that $\cn(x;k)$ is $4K(k)$-periodic and symmetric in the sense that $\cn(x;k)=\cn(-x;k)=-\cn(x+2K(k);k)$, and $\cn(x;k)$ strictly decreases from $1$ to $0$ as $x$ varies from $0$ to $K(k)$, where $K(k)$ denotes the complete elliptic integral of the first kind $\int_0^{\pi/2}g(\theta)d\theta$.
In addition, the elliptic sine function is similarly defined by $\sn(x;k):=\sin\phi$ by using the above $\phi$, and also the delta amplitude by $\dn(x;k):=(1-k^2\sin^2\phi)^{1/2}$.
For later use we recall the basic formulae: $\sn^2+\cn^2=1$, $\frac{d}{dx}\sn=\cn\dn$, and $\frac{d}{dx}\cn=-\sn\dn$.

It is known (cf.\ \cite[Proposition 2.3]{Linner1993}) that any solution to $2\kappa_{ss}+\kappa^3=0$ is of the form
\begin{equation}\label{eq04}
  \kappa(s)=\sqrt{2}\lambda\cn(\lambda s+\mu;\tfrac{1}{\sqrt{2}}), \quad \mbox{where}\ \lambda,\mu\in\mathbb{R}.
\end{equation}
If $\lambda=0$, then the solution is a trivial straight segment, while if $\lambda\neq0$, then the solution curve is called a rectangular elastica.
Since $\lambda$ is a scaling factor and $\mu$ is just shifting the variable, nontrivial solutions are essentially unique.

As a key fact, up to similarity, a rectangular elastica is represented by a part of the graph curve of a periodic function $U$ as in Figure \ref{fig:freeelastica}.

\begin{figure}
  \includegraphics[width=60mm]{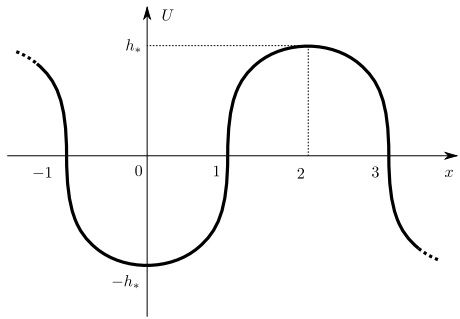}
  \caption{The graph of $U$: A rectangular elastica.}
  \label{fig:freeelastica}
\end{figure}

\begin{lemma}[Graph representation of a rectangular elastica]\label{lem:graph}
  Let $\gamma:\mathbb{R}\to\mathbb{R}^2$ be a smooth plane curve parameterized by the arclength such that $\gamma(0)=(0,-h_*)$, $\gamma_s(0)=(1,0)$, and the curvature is given by \eqref{eq04} with $\lambda=c_*/\sqrt{2}$ and $\mu=0$, where $c_*$ and $h_*$ are defined in \eqref{eq:height}.
  Then $\gamma$ is represented by the graph curve of a function $U:\mathbb{R}\to\mathbb{R}$ satisfying the following properties:
  \begin{itemize}
    \item $U(x)=U(-x)=-U(x+2)$ (and hence $U|_{[0,1]}$ determines the whole shape),
    \item $U$ is smooth in $(-1,1)$ while having vertical slope at $x=\pm1$,
    \item $U$ takes the minimum $-h_*$ at $x=0$,
    \item the curvature $\kappa$ of $\graph{U}$ is positive for $x\in[0,1)$ and vanishes for $x=1$,
    \item the arclength derivative $\kappa_s$ vanishes for $x=0$ and is negative for $x\in(0,1]$.
  \end{itemize}
  In particular, every plane curve with curvature of the form \eqref{eq04} coincides with a part of the graph curve of $U$ up to similarity.
\end{lemma}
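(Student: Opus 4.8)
The plan is to reduce everything to the curvature function $\kappa(s)=\sqrt2\lambda\cn(\lambda s;\tfrac1{\sqrt2})$ with $\lambda=c_*/\sqrt2$ and to build the curve explicitly by integrating the Frenet equations, then to read off the listed properties from the known behavior of $\cn$ and its symmetries. First I would record that with this choice of $\lambda$ one has $\lambda K(\tfrac1{\sqrt2})$ equal to a quarter-period in $s$, so that on $s\in[0,s_1]$ with $s_1:=K(\tfrac1{\sqrt2})/\lambda$ the curvature $\kappa$ decreases strictly from $\sqrt2\lambda=c_*$ to $0$, while $\kappa_s=-\sqrt2\lambda^2\sn\dn<0$ on $(0,s_1]$ and $\kappa_s(0)=0$ (since $\sn(0)=0$). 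This already gives the sign statements for $\kappa$ and $\kappa_s$ once I know the correspondence $x\mapsto s$ between the graph parameter and arclength is increasing on $[0,1)$, which will follow because the tangent angle stays in $(-\pi/2,\pi/2)$ there.

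Next I would integrate. Writing $\theta(s)=\int_0^s\kappa$ for the tangent angle (so $\theta(0)=0$, $T=(\cos\theta,\sin\theta)$, $N=(-\sin\theta,\cos\theta)$), the curve is $\gamma(s)=(0,-h_*)+\int_0^s(\cos\theta,\sin\theta)\,d\sigma$. The key computation is $\theta(s_1)=\int_0^{s_1}\kappa\,ds=\sqrt2\lambda\int_0^{s_1}\cn(\lambda s;\tfrac1{\sqrt2})\,ds$; the substitution $\lambda s=\int_0^\phi g$, i.e. $s=(\text{am})^{-1}$, converts this into $\sqrt2\int_0^{\pi/2}\cos\phi\cdot g(\phi)^{-1}\cdot g(\phi)\,d\phi$... more carefully, $ds=g(\phi)\,d\phi/\lambda$ and $\cn=\cos\phi$, so $\theta(s_1)=\sqrt2\int_0^{\pi/2}\cos\phi\,g(\phi)\,d\phi$ with $g(\phi)=(1-\tfrac12\sin^2\phi)^{-1/2}=\sqrt2(1+\cos^2\phi)^{-1/2}$, giving $\theta(s_1)=2\int_0^{\pi/2}\cos\phi(1+\cos^2\phi)^{-1/2}d\phi=2[\arcsin(\sin\phi/\sqrt2)]_0^{\pi/2}=2\arcsin(1/\sqrt2)=\pi/2$. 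Thus the tangent angle runs from $0$ to exactly $\pi/2$ over $[0,s_1]$, is strictly increasing there (as $\kappa>0$), so $\graph$ representability holds on the corresponding $x$-interval with $x$ ranging over some $[0,\ell]$, and the slope becomes vertical exactly at the right end; rescaling arclength so that this $x$-interval is exactly $[0,1]$ fixes $\lambda=c_*/\sqrt2$ — here I would check that $\ell=\int_0^{s_1}\cos\theta\,ds$ equals $1$ after this normalization, which is really just the definition of $c_*$ unwound. Similarly $U(0)=-h_*$ is the $y$-coordinate at $s=0$ by construction, and the minimum is attained there because $\theta$ changes sign through $0$ (odd reflection, see below) so $U'$ changes sign.

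The symmetry $U(x)=U(-x)=-U(x+2)$ comes from the symmetries of $\cn$: extending $\kappa$ to all of $\mathbb R$, evenness of $\cn$ gives $\kappa(-s)=\kappa(s)$ hence $\theta$ odd, which forces the curve to be symmetric about the vertical line $x=0$, i.e. $U$ even; and $\cn(x+2K)=-\cn(x)$ together with $\theta(s_1)=\pi/2$ (so $\theta(s_1+\sigma)=\pi/2$-rotation composed with the reflected piece) yields, after tracking that advancing by the half-period $2s_1$ in $s$ rotates the tangent by $\pi$ and reflects the curvature sign, the relation $U(x+2)=-U(x)$; I would phrase this as: the map $s\mapsto s+2s_1$ on the curve is a point reflection, which in graph coordinates is $x\mapsto x+2$, $U\mapsto -U$. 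Smoothness of $U$ on $(-1,1)$ is immediate since $\theta\in(-\pi/2,\pi/2)$ there makes $x$ a smooth increasing function of $s$; verticality at $x=\pm1$ is the statement $\theta=\pm\pi/2$ there. The final sentence — every curve with curvature of the form \eqref{eq04} is, up to similarity, part of $\graph U$ — follows from the scale invariance noted before Lemma \ref{lem:graph}: $\lambda$ is absorbed by scaling and $\mu$ by an arclength shift, and an arclength shift of this curve is again (a translate/rotate of) a sub-arc of the extended graph by the periodicity already established.

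The main obstacle is the bookkeeping in the symmetry/periodicity step: one must be careful that "advance by $2s_1$ in $s$" really corresponds to "$x\mapsto x+2$" and not merely "$x\mapsto x+\text{const}$", which requires knowing that the horizontal displacement over one half-period equals exactly twice the horizontal displacement over $[0,s_1]$ — true because the second quarter-period is the mirror image of the first under $\phi\mapsto\pi-\phi$, but this deserves an explicit line. Everything else is a routine, if slightly lengthy, manipulation of Jacobi functions and the Frenet system.
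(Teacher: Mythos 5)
Your overall route is the same as the paper's: integrate the Frenet system explicitly using elliptic antiderivatives over a quarter period, propagate by the symmetries of $\cn$, and dispose of the final ``up to similarity'' statement via the scaling/shift roles of $\lambda,\mu$. The pieces you carry out are correct: the signs of $\kappa$ and $\kappa_s$, the quarter-period turning angle $\theta(s_1)=\pi/2$, and (modulo the substitution you flag, which is a genuine change of variables rather than ``the definition unwound'') the horizontal displacement $c_*/\sqrt2$ before rescaling. However, there is a real gap: you never compute the \emph{vertical} displacement over the quarter period, $\int_0^{s_1}\sin\theta\,ds$. Using $\sin\theta=\sqrt2\,\sn(\lambda s)\dn(\lambda s)$ this equals $\sqrt2/\lambda$, which with your normalization $\lambda=c_*/\sqrt2$ is exactly $2/c_*=h_*$; this is the only place where the specific value $h_*=2/c_*$ enters, and it is what places the vertical-slope (inflection) point at $(1,0)$. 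Without it you cannot conclude $U(\pm1)=0$, hence neither the stated relation $U(x+2)=-U(x)$ (the half-period symmetry is a glide reflection across the horizontal line through the inflection points, so a priori you only get $U(x+2)=2c-U(x)$ with $c$ the unknown height of that line) nor that $-h_*$ is the \emph{global} minimum of $U$. The paper performs exactly this computation ($y(K)-y(0)=\sqrt2$ alongside $x(K)-x(0)=c_*/\sqrt2$) to fix the normalization.

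Relatedly, your symmetry bookkeeping is incorrect as written: since $\theta(s)=2\arcsin\bigl(\tfrac1{\sqrt2}\sn(\lambda s)\bigr)$, one has $\theta(s+2s_1)=-\theta(s)$, so advancing by the half period reflects the tangent across the horizontal axis rather than rotating it by $\pi$, and the map $s\mapsto s+2s_1$ is a glide reflection $(x,y)\mapsto(x+2,-y)$, not a point reflection. The clean repair is the parameter reflection $s\mapsto 2s_1-s$: from $\sn(K+u)=\sn(K-u)$ one gets $\theta(s_1+\sigma)=\theta(s_1-\sigma)$, hence the curve is point-symmetric about the inflection point $\gamma(s_1)$; once the vertical computation identifies $\gamma(s_1)=(1,0)$, this point symmetry combined with evenness about $x=0$ gives $U(2-x)=-U(x)$ and therefore $U(x+2)=-U(x)$. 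With these two repairs your argument coincides with the paper's proof.
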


Although the graph representability of a rectangular elastica is a classical fact, we provide here a complete proof that only relies on the curvature representation in terms of elliptic functions for the reader's convenience.

\begin{proof}[Proof of Lemma \ref{lem:graph}]
  We first mention that the last statement immediately follows by the aforementioned fact that the parameters $\lambda$ and $\mu$ correspond to scaling and parameter-shifting, respectively, and by the elementary fact that a general plane curve is characterized by the curvature function up to translation and rotation.

  We now prove the graph representability.
  For computational simplicity, we mainly investigate the behavior of a curve $\gamma=(x,y)$ such that the curvature is given by \eqref{eq04} with $\lambda=1$ and $\mu=0$, namely
  \begin{equation*}
    \kappa(s)=\sqrt{2}\cn(s;\tfrac{1}{\sqrt{2}}),
  \end{equation*}
  and later rescale the parameter $\lambda$.
  By the periodicity of $\cn$, we only need to focus on the part in the quarter period $[0,K(\tfrac{1}{\sqrt{2}})]$ (corresponding to the graph curve of $U|_{[0,1]}$ up to similarity).
  We first notice that, since the primitive function of $\cn(t;k)$ is $\frac{1}{k}\arcsin(k\sn(t;k))$, by normalizing $\theta(0)=0$, we can represent the angle function $\theta$ of $\gamma$ by
  \begin{equation}\label{eq01}
    \theta(s) = \int_0^s\kappa(s')ds' = 2\arcsin\left(\tfrac{1}{\sqrt{2}}\sn\left(s;\tfrac{1}{\sqrt{2}}\right)\right).
  \end{equation}
  In particular, we have
  \begin{equation}\label{eq02}
    \theta\big(K(\tfrac{1}{\sqrt{2}})\big)-\theta(0)=\frac{\pi}{2}.
  \end{equation}
  In addition, using \eqref{eq01}, and noting that $0\leq\theta(s)\leq\pi/2$ in the quarter period, we obtain the following representations for $s\in[0,K(\tfrac{1}{\sqrt{2}})]$:
  \begin{align*}
    y(s)-y(0) &= \int_0^s \sin\theta(t)dt= \int_0^s 2\sin\tfrac{\theta(t)}{2}\sqrt{1-\sin^2\tfrac{\theta(t)}{2}}dt\\
    &= \int_0^s \sqrt{2}\sn(t;\tfrac{1}{\sqrt{2}})\dn(t;\tfrac{1}{\sqrt{2}}) dt = \sqrt{2}\left(1-\cn(s;\tfrac{1}{\sqrt{2}})\right),\\
    x(s)-x(0) &= \int_0^s \cos\theta(t)dt = \int_0^s (1-2\sin^2\tfrac{\theta(t)}{2}) dt = \int_0^s (1-\sn^2(t;\tfrac{1}{\sqrt{2}})) dt\\
    &= \int_0^s \tfrac{d}{dt}\Big(F\big(\sn(t;\tfrac{1}{\sqrt{2}})\big)\Big) dt = F\big(\sn(s;\tfrac{1}{\sqrt{2}})\big),
  \end{align*}
  where $F(r):=\int_0^r(1-\sigma^2)^{1/2}(1-\sigma^2/2)^{-1/2}d\sigma$.
  In particular, we immediately have
  \begin{equation}\label{eq08}
    y\big(K(\tfrac{1}{\sqrt{2}})\big)-y(0)=\sqrt{2},
  \end{equation}
  and also we deduce that
  \begin{equation}\label{eq09}
    x(K\big(\tfrac{1}{\sqrt{2}})\big)-x(0)=F(1)=\frac{c_*}{\sqrt{2}},
  \end{equation}
  where the last calculation follows by the change of variables $\sigma=\sqrt{2}\sin(\varphi/2)$, cf.\ \eqref{eq:height}.
  Combining \eqref{eq08} and \eqref{eq09}, we in particular have
  \begin{equation}\label{eq03}
    y\big(K(\tfrac{1}{\sqrt{2}})\big)-y(0)=h_*\Big(x(K\big(\tfrac{1}{\sqrt{2}})\big)-x(0)\Big).
  \end{equation}
  We now rescale $\lambda$ to be $c_*/\sqrt{2}$ so that $x(K\big(\tfrac{1}{\sqrt{2}})\big)-x(0)=1$, cf.\ \eqref{eq09}.
  Then, since \eqref{eq02} and \eqref{eq03} are scale invariant, we can easily check that a curve $\gamma$ with $\kappa(s)=c_*\cn(\tfrac{c_*}{\sqrt{2}}s;\tfrac{1}{\sqrt{2}})$ defines the desired function $U$.
\end{proof}

From the representation of a free elastica, we can now deduce that the coincidence set in \eqref{eq:BVP} is not empty, since otherwise the graph curve of $u$ would be fully a free elastica that satisfies the vanishing-curvature boundary condition, but this contradicts the fact that $u$ cannot have a vertical slope (as $W^{2,2}(I)\subset C^1(\bar{I})$).
Therefore, with the help of concavity we have
\begin{equation}\label{eq:apex}
  \graph{u}\cap\graph{\psi}=\{(0,\psi(0))\}.
\end{equation}
This in particular means that a minimizer $u$ is smooth except at the origin.

\subsection{Boundary value problems}\label{subsec:BVP}

Keeping the facts in Section \ref{subsec:freeelastica} in mind, we now turn to the proof of Theorem \ref{thm:obstacle}.
To this end, given a positive constant $h>0$, we consider the following boundary value problem for a smooth function $u$ on $[0,1]$:
\begin{align}\label{eq:BVPsym}
  \begin{cases}
    2\kappa_{ss}+\kappa^3=0,\\
    u(0)=u''(0)=0,\ u(1)=h,\ u'(1)=0,
  \end{cases}
\end{align}
where the first equation is solved by the whole graph curve of $u$.

\begin{lemma}\label{lem:1}
  If $h\geq h_*$, then there is no solution to \eqref{eq:BVPsym}.
  If $h<h_*$, then there exists a unique solution to \eqref{eq:BVPsym}.
\end{lemma}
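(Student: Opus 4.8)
The plan is to recast the boundary value problem \eqref{eq:BVPsym} entirely in terms of the curvature representation \eqref{eq04} together with the graph picture of Lemma \ref{lem:graph}. By the last statement of Lemma \ref{lem:graph}, any solution $u$ of \eqref{eq:BVPsym} has a graph curve that is, up to similarity (translation and scaling), a part of the graph of the universal function $U$. First I would pin down which similarity is allowed: reflection symmetry is broken by the boundary conditions (we want $u$ increasing on $[0,1]$ with $u'(1)=0$ and $u''(0)=0$), so the relevant arc of $U$ must run between a zero of curvature and a critical point of curvature, i.e.\ between $x=1$ and $x=0$ in the universal picture (or a translate thereof). The condition $u''(0)=0$ forces the left endpoint of the arc to sit at a point where the curvature of $\graph U$ vanishes; the condition $u'(1)=0$ forces the right endpoint to sit at a point where the slope of $\graph U$ is horizontal, which by Lemma \ref{lem:graph} happens exactly at the minimum $x=0$. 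Thus the solution arc is precisely a rescaled copy of $\graph U|_{[0,1]}$, with the scaling factor $\rho>0$ determined by the horizontal extent: $\rho\cdot 1 = 1$, so in fact $\rho = 1$ and no scaling freedom remains. Then the vertical extent is forced to be $\rho h_* = h_*$, which produces the solution exactly when $h = h_*$ — but we also must allow the arc to be a \emph{proper} sub-arc of $\graph U|_{[0,1]}$ that still starts at a zero of $\kappa$ and ends at a critical point of $\kappa$.

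The key realization is that $U|_{[0,1]}$ admits a one-parameter family of rescaled sub-arcs meeting the endpoint conditions: since $\kappa_s$ vanishes only at $x=0$ on $[0,1]$ and $\kappa$ vanishes only at $x=1$, there is no shorter such arc inside $[0,1]$ — so one instead takes the full arc and rescales. Concretely, for each $\lambda>0$ the curve with $\kappa(s)=\sqrt2\,\lambda\cn(\lambda s;\tfrac1{\sqrt2})$ traverses one quarter-period, and by the scale-invariant identity \eqref{eq03} its vertical rise equals $h_*$ times its horizontal advance. Rescaling so that the horizontal advance is $1$ (as in the end of the proof of Lemma \ref{lem:graph}) gives vertical rise exactly $h_*$. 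So actually the naive counting gives only $h=h_*$. To recover solutions for every $h\in(0,h_*)$, I would instead allow the arc to be a sub-arc of $\graph U$ that starts at $x=1$ but is traversed only \emph{partway} toward a critical point — wait, that fails $u'(1)=0$. The correct move: use the other available inflection/critical structure. Reconsidering: the boundary conditions $u(0)=u''(0)=0$, $u'(1)=0$ at the \emph{other} end $u(1)=h$ are matched by taking, for a parameter $t_0\in(0,K(\tfrac1{\sqrt2})]$, the arc of the $\lambda=1$ elastica from $s=t_0$ (where we impose $\kappa$ vanishing by choosing the shift $\mu$ so that $\kappa(0)=0$, i.e.\ reparameterize) — more cleanly, shift $\mu$ so that the left endpoint is an inflection ($\cn=0$, i.e.\ parameter $=K(\tfrac1{\sqrt2})$) and let the arc have variable length up to the next critical point of $\kappa$ at parameter $2K(\tfrac1{\sqrt2})$; then rescale $\lambda$ to fix the horizontal width to $1$. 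Define
\begin{equation*}
  H(\lambda) := \frac{y\text{-rise of this arc}}{x\text{-advance of this arc}}\Big/1,
\end{equation*}
and the task reduces to showing the map $h\mapsto$ (admissible arc) is a bijection onto $(0,h_*)$ for $h<h_*$ and empty for $h\ge h_*$.

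The clean way to organize this — and here is where Corollary \ref{cor:monotoneangle} enters — is: translate so that the left endpoint of the candidate arc is at the origin $O=(0,0)$ with horizontal tangent pointing rightward and $\kappa$ of the graph curve equal to zero there (matching $u(0)=u''(0)=0$). Since on the relevant arc $\kappa\kappa_s>0$ (curvature increasing in magnitude away from its zero — recall from Lemma \ref{lem:graph} that $\kappa>0$, $\kappa_s<0$ as $x$ runs $0\to1$, so running the arc in the reversed direction from the inflection gives $\kappa>0$, $\kappa_s>0$), Corollary \ref{cor:monotoneangle} applies and $\omega$ is strictly monotone along the arc. Monotonicity of $\omega$ translates into monotonicity of the polar angle of the position vector $\gamma(s)$ relative to the tangent, which in turn controls the ratio (height)/(width) as a strictly monotone function of the arc-length extent — call it $\Phi$. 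Then I would show: (i) $\Phi$ is continuous and strictly monotone on its natural parameter interval; (ii) $\lim$ at one end of $\Phi$ is $0$ (degenerate short arc, tangent to the $x$-axis, the graph nearly flat) and the supremum of $\Phi$ is exactly $h_*$, attained only in the limit of the full quarter-period arc, i.e.\ $\Phi$ maps onto $[0,h_*)$; (iii) because $u'(1)=0$ forces the right endpoint to be a critical point of $\kappa$, once $\lambda$ is fixed by width $=1$ the arc is rigid, hence uniqueness; (iv) for $h\ge h_*$ the equation $\Phi=h$ has no solution, giving nonexistence. The main obstacle I anticipate is step (ii)–(iii): one must carefully identify the correct admissible family of arcs (the interplay of the shift $\mu$, the scaling $\lambda$, and the two boundary conditions $u''(0)=0$ and $u'(1)=0$) and verify rigorously that the limiting ratio is precisely $h_*$ using \eqref{eq08}–\eqref{eq03}, and that the monotonicity from Corollary \ref{cor:monotoneangle} genuinely forces the ratio (height over width), not merely the polar angle, to be monotone — this requires relating $\omega$ along the graph arc to the geometric ratio via an integration, and ruling out any non-graphical or multiply-covered competitor by the graph representability clause of Lemma \ref{lem:graph}.
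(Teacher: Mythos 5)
There is a genuine gap, and it is exactly the one you flag at the end as the ``main obstacle'': you never handle the free slope $u'(0)$, i.e.\ the rotation degree of freedom, and as a result your reduction is set up with the wrong normalization. The boundary conditions in \eqref{eq:BVPsym} fix only $u(0)=u''(0)=0$ at the left end (the point is the origin and the curvature vanishes there); they do \emph{not} fix $u'(0)$, yet you translate the candidate arc so that it has ``horizontal tangent pointing rightward'' at that end, which silently imposes $u'(0)=0$. Symmetrically, your claim (iii) that $u'(1)=0$ forces the right endpoint to be a critical point of $\kappa$ is false: it only forces a horizontal tangent there, and for $h<h_*$ the right endpoint of the actual solution is not a curvature-critical point of the underlying elastica. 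With both endpoints pinned as you pin them, the family collapses: the turning angle along the arc from an inflection to the next curvature-critical point is exactly $\pi/2$ by \eqref{eq02}, so the two tangents can never both be horizontal, and this is precisely why your ``naive counting'' kept returning only $h=h_*$ and why the rescaling trick you attempt afterwards cannot generate solutions for $0<h<h_*$.

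The paper's proof keeps the rotation as a genuine parameter: a solution $u$ has $\graph{u}\subset\lambda R_\phi\graph{U_*}$ with $\phi\in(-\pi/2,0)$ determined by $u'(0)$, where $U_*(x)=U(x+1)$ starts at the origin with vertical slope and zero curvature. The arc runs from the inflection point placed at the origin to a variable point $\gamma_*(s)$, $s\in(0,L_*)$, which is rotated so that its tangent is horizontal and scaled so that it lands at $(1,h)$. The rotation- and scale-invariant content of the two conditions $u(1)=h$, $u'(1)=0$ is then the single equation $\tan(-\omega_*(s))=h$: since the far tangent is horizontal and the origin sits at the inflection endpoint, the polar tangential angle at $s$ is literally the angle of the chord to $(1,h)$, so no additional ``integration'' relating $\omega$ to a height/width ratio is needed (that ratio in your normalization is a different, non-invariant quantity). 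One then checks that the correspondences $u\mapsto(\lambda,\phi)\mapsto s$ are one-to-one (your sketch does not address this), applies Corollary \ref{cor:monotoneangle} to $\gamma_*$ (where $\kappa_*<0$ and $(\kappa_*)_s<0$) to get strict monotonicity of $\omega_*$, and uses $U_*(1)=h_*$ to identify the range $\omega_*((0,L_*))=(-\arctan h_*,0)$; this simultaneously gives a unique $s$ for $h<h_*$ and no $s$ for $h\geq h_*$. So the missing idea is precisely the rotation parameter together with the identity $\tan(-\omega_*(s))=h$; without it, the argument as written does not produce the existence or the uniqueness half of the lemma.
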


\begin{proof}
  Define $U_*:[0,1]\to[0,h_*]$ by $U_*(x):=U(x+1)$, where $U$ is defined in Lemma \ref{lem:graph}.
  Notice that $U_*$ is increasing and concave, and has vertical slope and vanishing curvature at $x=0$.
  By the uniqueness property of free elasticae in Lemma \ref{lem:graph}, a smooth function $u:[0,1]\to\mathbb{R}$ satisfies \eqref{eq:BVPsym} if and only if
  \begin{equation}\label{eq05}
    \begin{cases}
      \exists\lambda>0,\ \exists\phi\in(-\pi/2,0)\ \mbox{such that}\ \graph{u}\subset \lambda R_\phi\graph{U_*},\\
      u(1)=h,\ u'(1)=0,
    \end{cases}
  \end{equation}
  cf.\ Figure \ref{fig:BVP}, where $R_\phi$ is the counterclockwise rotation matrix through $\phi$.

  We now confirm that finding a solution $u$ to \eqref{eq05} is equivalent to the following problem:
  \begin{equation}\label{eq:reduced2}
    \begin{split}
      &\mbox{Find a pair $(\lambda,\phi)\in(0,\infty)\times(-\pi/2,0)$ such that}\\
      &\mbox{$\lambda R_\phi \graph{U_*}$ and the line $\{y=h\}$ meet tangentially at $(1,h)\in\mathbb{R}^2$.}
    \end{split}
  \end{equation}
  Given a solution $u$ to \eqref{eq05}, we let $(\lambda_u,\phi_u)$ denote a corresponding pair in \eqref{eq05}.
  This pair is in fact uniquely characterized by $u$; indeed, the value of $u'(0)$ first characterizes the angle $\phi_u$ to be $\arctan{u'(0)}-\pi/2$ since $U_*'(0+)=\infty$; then $\lambda_u$ is also characterized, since $U_*$ is concave and $U_*(0)=0$ so that $\graph{U_*}$ and a half-line $\{\Lambda e\mid \Lambda>0\}$, $e\in\mathbb{S}^1$, intersect (at most) one point.
  Clearly, such $(\lambda_u,\phi_u)$ solves \eqref{eq:reduced2}, and the correspondence of solutions $u \mapsto (\lambda_u,\phi_u)$ is one-to-one.

  We further prove that solving \eqref{eq:reduced2} is also equivalent to the following problem:
  \begin{equation}\label{eq:reduced}
    \mbox{Find $s\in(0,L_*)$ such that $\tan(-\omega_*(s))=h$},
  \end{equation}
  where $L_*$ and $\omega_*$ denote the length and the polar tangential angle of the unit-speed curve $\gamma_*$ representing $\graph{U_*}$, respectively.
  Given a solution $(\lambda,\phi)$ to \eqref{eq:reduced2}, we define $s_{(\lambda,\phi)}\in(0,L_*)$ so that
  \begin{equation}\label{eq07}
    \lambda R_{\phi}\gamma_*(s_{(\lambda,\phi)})=(1,h).
  \end{equation}
  Clearly, the map $(\lambda,\phi)\mapsto s_{(\lambda,\phi)}$ is well defined.
  Thanks to invariance of $\omega_*$ with respect to rotation and scaling, this $s_{(\lambda,\phi)}$ solves \eqref{eq:reduced}, cf.\ Figure \ref{fig:BVP}; in particular,
  \begin{equation}\label{eq06}
    R_{\phi}T_*(s_{(\lambda,\phi)})=(1,0), \quad \mbox{where } T_*:=(\gamma_*)_s.
  \end{equation}
  We now prove that the correspondence of solutions $(\lambda,\phi)\mapsto s_{(\lambda,\phi)}$ is one-to-one.
  The injectivity follows since if $s_{(\lambda,\phi)}=s_{(\lambda',\phi')}$ holds for two solutions $(\lambda,\phi)$ and $(\lambda',\phi')$ to \eqref{eq:reduced2}, then $\phi=\phi'$ holds by \eqref{eq06}, and moreover $\lambda=\lambda'$ holds by \eqref{eq07}.
  Concerning the surjectivity, for any $s\in(0,L_*)$ solving \eqref{eq:reduced}, we can choose $\phi\in(-\pi/2,0)$ satisfying $R_\phi T_*(s)=(1,0)$ thanks to the shape of $U_*$, and also choose $\lambda>0$ satisfying $\lambda R_\phi\gamma_*(s)=(1,h)$ thanks to $\tan(-\omega_*(s))=h$, so that the resulting curve $\lambda R_\phi\graph{U_*}$ solves \eqref{eq:reduced2}, cf.\ Figure \ref{fig:BVP}.

  Consequently, the number of solutions to \eqref{eq:BVPsym} is characterized by that of \eqref{eq:reduced}.
  We finally investigate how the number of solutions of \eqref{eq:reduced} depends on the value of $h>0$.
  By Lemma \ref{lem:graph}, the curvature $\kappa_*$ of $\graph{U_*}$ satisfies that $\kappa_*<0$ and $(\kappa_*)_s<0$ on $(0,L_*)$, and hence we are able to use Corollary \ref{cor:monotoneangle} and thus deduce that $\omega_*$ is strictly decreasing.
  Since $U_*(1)=h_*$, we in particular have $\omega_*((0,L_*))=(-\theta_*,0)$, where $\theta_*:=\arctan{h_*}$ ($=1.03106...\approx 59.06^\circ$).
  The monotonicity of $\omega_*$ now implies that if $h<h_*$, then there is a unique solution $s\in(0,L_*)$ to \eqref{eq:reduced}, while if $h\geq h_*$, no solution exists.
  The proof is now complete.
\end{proof}

\begin{figure}
  \includegraphics[width=90mm]{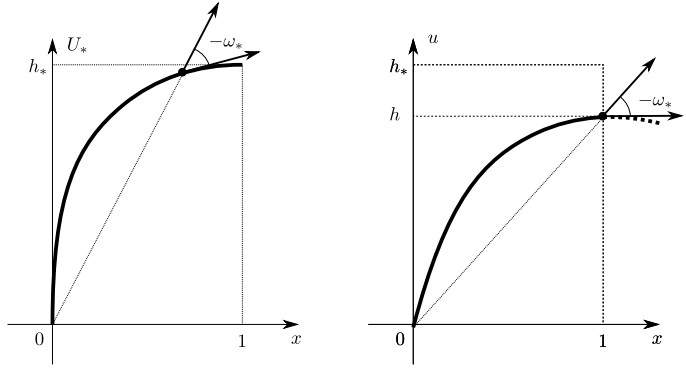}
  \caption{The correspondence between $U_*$ and $u$ in Lemma \ref{lem:1}}
  \label{fig:BVP}
\end{figure}

We are now in a position to complete the proof of Theorem \ref{thm:obstacle}.

\begin{proof}[Proof of Theorem \ref{thm:obstacle}]
  We first address the case of $h<h_*$.
  The existence of a minimizer in the symmetric class is already known (cf.\ \cite[Lemma 4.2]{DallAcqua2018}), so we only need to prove the uniqueness.
  If a function $\bar{u}\in X_{\psi,\mathrm{sym}}$ solves \eqref{eq:BVP}, then by \eqref{eq:apex}, symmetry, and $C^1$-regularity of $\bar{u}$, we deduce that the restriction $\bar{u}|_{[-1,0]}$ solves \eqref{eq:BVPsym} up to the shift $x\mapsto x+1$, and hence such a function $\bar{u}$ must be unique in view of Lemma \ref{lem:1}.

  We turn to the case of $h\geq h_*$.
  Nonexistence in the symmetric class $X_{\psi,\mathrm{sym}}$ directly follows by Lemma \ref{lem:1} as in the above proof.
  In the nonsymmetric case, we prove by contradiction, so suppose that a solution $\bar{u}\in X_{\psi}$ would exist.
  Then, up to reflection, $\bar{u}$ would take its maximum in $(-1,0]$.
  Letting $\bar{x}\in(-1,0]$ be a maximum point, and noting the scale invariance of free elasticae, we would deduce that the rescaled restriction
  $\tilde{u}(x):=\frac{1}{a}\bar{u}|_{[-1,\bar{x}]}(a(x+1)-1)$, where $a=\bar{x}-(-1)$, solves \eqref{eq:BVPsym} up to the shift as above and replacing $h$ with $\tilde{h}:=\frac{1}{a}\bar{u}(\bar{x})$.
  Then we would have $\tilde{h}\geq h_*$ since $a\leq1$ and $\bar{u}(\bar{x})=\max{\bar{u}}\geq\max{\psi}=h\geq h_*$, but this contradicts the nonexistence part of Lemma \ref{lem:1}.
  Therefore, no solution exists even in $X_{\psi}$.
\end{proof}

\begin{remark}[Regularity of solutions]
  By the nature of the variational inequality corresponding to our obstacle problem, the global regularity of a minimizer $u$ in \eqref{eq:min} is improved so that $u\in C^2(\bar{I})$ and $u'''\in BV(I)$, cf.\ \cite{DallAcqua2018,Dayrens2018}.
  The unique symmetric minimizer obtained here has this regularity, of course, but is not of class $C^3$ by the construction; this fact confirms optimality of the above regularity.
  Incidentally, we mention that our proof of uniqueness relies on just the trivial $C^1$-regularity at the apex of the cone, not invoking the higher regularity.
  However, we expect that the higher regularity would play an important role if we tackle the uniqueness problem in the general (nonsymmetric) class $X_{\psi}$ by our approach.
  We finally note that the $C^2$-regularity (or concavity etc.)\ is no longer true for an obstacle problem with an adhesion effect, cf.\ \cite{Miura2016,Miura2017}.
\end{remark}

\bibliography{bibliography}
\bibliographystyle{amsplain}

\end{document}